\newcommand {\Holo}{\mathop{\rm Hol}\nolimits}
\def\1#1{\overline{#1}}
\def\2#1{\widetilde{#1}}
\def\3#1{\widehat{#1}}
\def\4#1{\mathbb{#1}}
\def\5#1{\frak{#1}}
\def\6#1{{\mathcal{#1}}}
\newcommand{\R}{\mathbb R}
\newcommand{\C}{\mathbb C}
\newcommand{\B}{\mathbb B}
\newcommand{\N}{\mathbb N}
\def\v{\varphi}
\def\Re{{\sf Re}\,}
\def\id{{\sf id}}
\def\diag{{\sf diag}}
\newtheorem{theorem}{Theorem}
\newtheorem{lemma}{Lemma}
\newtheorem{proposition}{Proposition}
\newtheorem{corollary}{Corollary}
\newtheorem{definition}{Definition}
\newtheorem{example}{Example}
\newtheorem{remark}{Remark}
\title[Normal forms for semigroups]{Normal forms and linearization of holomorphic dilation type semigroups in several variables}
\author[F. Bracci]{Filippo Bracci}
\address{Dipartimento Di Matematica\\
Universit\`{a} di Roma \textquotedblleft Tor Vergata\textquotedblright\ \\
Via Della Ricerca Scientifica 1, 00133 \\
Roma, Italy} \email{fbracci@mat.uniroma2.it}
\author[M. Elin]{Mark Elin}
\author[D. Shoikhet]{ David Shoikhet}
\address{Department of Mathematics\\
ORT Braude College \\ 21982 Karmiel,
Israel}\email{mark$\underline{\ }$elin@braude.ac.il}
\email{davs@braude.ac.il}
\thanks{This research is part of the European
Science Foundation Networking Programme HCAA}
\begin{document}

\maketitle

\begin{abstract}
In this paper we study commuting families of holomorphic
mappings in $\mathbb{C}^n$ which form abelian semigroups with
respect to their real parameter. Linearization models for
holomorphic mappings are been used in the spirit of
Schr\"oder's classical functional equation.
\end{abstract}

The one-dimensional linearization models for holomorphic
mappings and semigroups, based on Schr\"oder's and Abel's
functional equation have been studied by many mathematicians
for more than a century.

These models are powerful tools in investigations of asymptotic
behavior of semigroups, geometric properties of holomorphic
mappings and their applications to  Markov's stochastic
branching processes.

It turns out that solvability as well as constructions of the
solution of Schr\"oder's or Abel's functional equations
properly, depend on the location of the so-called Denjoy--Wolff
point of the given mappings or semigroups. In particular,
recently many efforts were directed to the study of semigroups
with a boundary Denjoy--Wolff point \cite{BE-PH, SD, AM-92,
RS-SD-96}.

Multidimensional cases are more delicate even when the
Denjoy--Wolff point is inside of the underlined domain. It appears
that the existence of the solution (the so-called K{\oe}nigs'
function) of a multidimensional Schr\"oder's equation depends also
on the resonant properties of the linear part of a given mapping
(or generator), and its relation to homogeneous polynomials of
higher degrees.

In parallel, the study of commuting mappings (or semigroups) is of
interest to many mathematicians and goes back to the classical
theory of linear operators, differential equations and evolution
problems.

In this paper we consider, in particular, the rigidity property
of two commuting semigroups. Namely, the question we study is
whether those semigroups coincide whenever the linear parts of
their generators at their common null point are the same.

\vspace{6mm}

Let $D$ be a domain in $\C^n$. We denote the set of holomorphic
mappings on $D$ which take values in a set $\Omega\subset\C^m$ by
$\Holo(D,\Omega)$. For each $f\in\Holo(D,\C^m)$, the Frech\'et
derivative of $f$ at a point $z\in D$ (which is understood as a
linear operator acting from $\C^n$ to $\C^m$ or $n\times
m$-matrix) will be denoted by $df_z$.

For brevity, we write $\Holo(D)$ for $\Holo(D,D)$. The set
$\Holo(D)$ is a semigroup with respect to composition operation.

\begin{definition} A family $\mathcal{S}=\{\v_t\}_{t\ge0}\subset\Holo(D)$ of
holomorphic self-mappings of $D$ is called a {\sl one-parameter
continuous semigroup} if the following conditions are
satisfied:

(i) $\v_{t+s}=\v_t\circ\v_s$ for all $s,\,t\ge0$;

(ii) $\lim\limits_{t\to0^+}\v_t(z)=z$ for all $z\in D$.
\end{definition}

It is more or less known that condition (ii) (the right continuity
of a semigroup at zero) actually implies its continuity (right and
left) on all of $\R^{+}=[0,\infty)$. Moreover, in this case the
semigroup is differentiable on $\R^{+}$ with respect to the
parameter $t\ge0$ (see \cite{BE-PH, SD, AM-92, RS-SD-96}). Thus,
for each $z\in D$ there exists the limit
\begin{equation}\label{gen}
\lim_{t\rightarrow 0^{+}}\frac{\v_t(z)-z}t=f(z),
\end{equation}
which belongs to $\Holo(D,\C^n)$. The mapping $f\in\Holo(D,\C^n)$
defined by (\ref{gen}) is called the {\it(infinitesimal)
generator} of $\mathcal{S}=\left\{\v_t\right\}_{t\ge0}$.

Furthermore, the semigroup $\mathcal{S}$ can be defined as a
(unique) solution of the Cauchy problem:
\begin{equation}\label{cauchy}
\left\{
\begin{array}{l}
{\displaystyle\frac{\partial \v_t(z)}{\partial
t}}=f(\v_t(z)),\quad t\ge0, \vspace{3mm}\\ \v_0(z)=z,\quad z\in D.
\end{array}
\right.
\end{equation}

\begin{definition}
We say that a semigroup $\left\{\v_t\right\}_{t\ge0}$ is  {\sl
linearizable} if there is a biholomorphic mapping
$h\in\Holo(D,\C^n)$ and a linear semigroup
$\left\{\psi_t\right\}_{t\ge0}$ such that
$\left\{\v_t\right\}_{t\ge0}$ conjugates with
$\left\{\psi_t\right\}_{t\ge0}$ by $h$, namely,
$h\circ\v_t=\psi_t\circ h$ for all $t\ge0$.
\end{definition}

Linearization methods for semigroups on the open unit disk in
$\C\left(=\C^1\right)$ have been studied by many mathematicians
(see, for example, \cite{Sis, SD-07, E-S-Y}). At the same time,
little is known about multi-dimensional cases. For example, in
\cite{Fab} and \cite{E-S-2004} the problem has been studied for
some special class of the so-called one-dimensional type
semigroups.

In this paper, we will concentrate on the case when a semigroup
has a (unique) interior attractive fixed point, i.e.,
$\lim\limits_{t\to\infty}\v_t(z)=\tau\in D\subset\C^n$ for all
$z\in D$. It is well known that this condition is equivalent to
that fact that the spectrum $\sigma(A)$ of the linear operator
(matrix) $A$ defined by $A:=df_\tau$ lies in the open left
half-plane (see \cite{AM-88} and \cite{RS-SD-96}) and
$d(\v_t)_\tau=e^{At}$. Usually, such semigroups are named {\it of
dilation type}. Thus, for the one-dimensional case, it is possible
to linearize the semigroup by solving Schr\"oder's functional
equation:
\[
h\left(\v_t(z)\right)=e^{f'(\tau)t}h(z)
\]
(see, for example, \cite{Sis, SD}).

\begin{remark} It should be noted that the latter equation involves the eigenvalue
problem for the linear semigroup $\left\{C_t\right\}_{t\ge0}$ of
composition operators on the space $\Holo(D,\C)$ defined by
$C_t:\,h\mapsto h\circ\v_t$.
\end{remark}

It is easy to show that the solvability of a higher dimensional
analog of Schr\"oder's functional equation
\begin{equation}\label{schroder}
h\left(\v_t(z)\right)=e^{At}h(z),\quad A=df_\tau,
\end{equation}
is equivalent to a generalized differential equation:
\begin{equation}\label{dif-schroder}
dh_z f(z)=A h(z).
\end{equation}

It seems that in general useful criteria (necessary and sufficient
conditions) for solvability of (\ref{dif-schroder}) are unknown.

Without loss of generality, let us assume that $\tau=0$.

\begin{proposition}\label{prop1}
Equation (\ref{schroder}), or equivalently, (\ref{dif-schroder})
is solvable if and only if there is a polynomial mapping
$Q:\,\C^n\mapsto\C^n$ with $Q(O)=O$ and $dQ_O=\id$, such that the
limit
\[
\lim_{t\to\infty}e^{-At}Q(\v_t(z))=:h(z),\quad z\in D,
\]
exists.
\end{proposition}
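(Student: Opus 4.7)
The plan is to take $Q$ to be a truncation of the K\oe nigs function $h$ and to rely on the spectral decay rates of $e^{At}$ and $e^{-At}$. For the \emph{sufficiency} direction, assume the limit $h(z):=\lim_{t\to\infty}e^{-At}Q(\varphi_t(z))$ exists for every $z\in D$. Using the semigroup identity $\varphi_{t+s}=\varphi_t\circ\varphi_s$,
\[
h(\varphi_s(z))=\lim_{t\to\infty}e^{-At}Q(\varphi_{t+s}(z))=e^{As}\lim_{r\to\infty}e^{-Ar}Q(\varphi_r(z))=e^{As}h(z),
\]
so $h$ satisfies \eqref{schroder}. Holomorphicity of $h$ on $D$ then follows from Vitali's theorem applied to the family $\{e^{-At}Q\circ\varphi_t\}_{t\ge 0}$, whose local uniform boundedness is a consequence of the same spectral estimates used below.

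For the \emph{necessity} direction, assume \eqref{schroder} has a solution $h\in\Holo(D,\C^n)$ normalized by $h(0)=0$ and $dh_0=\id$ (differentiating \eqref{schroder} at $0$ shows $dh_0$ commutes with every $e^{At}$, so this normalization is natural). I would take $Q$ to be the Taylor polynomial of $h$ at $0$ of sufficiently high degree $k$, so that automatically $Q(0)=0$ and $dQ_0=\id$. Adding and subtracting $h$ and invoking the Schr\"oder equation yields
\[
e^{-At}Q(\varphi_t(z))=h(z)+e^{-At}(Q-h)(\varphi_t(z)),
\]
reducing the problem to showing the remainder vanishes as $t\to\infty$. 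Since $\sigma(A)\subset\{\Re\lambda<0\}$, one has $\|e^{At}\|\le Me^{-\alpha t}$ for some $\alpha>0$, while $\|e^{-At}\|\le Me^{\beta t}$ for $\beta:=-\min\{\Re\lambda:\lambda\in\sigma(A)\}+\varepsilon$. Combining with $\|(Q-h)(w)\|=O(\|w\|^{k+1})$ near $0$ and the bound $\|\varphi_t(z)\|\le C\|e^{At}\|\,\|h(z)\|$ on compact subsets of $D$ (obtained, for instance, from $\varphi_t=h^{-1}\circ e^{At}\circ h$ near $0$), one gets
\[
\|e^{-At}(Q-h)(\varphi_t(z))\|\le C'\,e^{(\beta-(k+1)\alpha)\,t}\to 0
\]
as soon as $k$ is chosen so that $(k+1)\alpha>\beta$, with locally uniform convergence on $D$.

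The principal subtlety is the spectral balance between $\|e^{At}\|$ and $\|e^{-At}\|$: when the eigenvalues of $A$ have widely differing real parts, one needs a large truncation degree $k$, but since $\alpha>0$ one can always arrange $(k+1)\alpha>\beta$. A secondary technical point is justifying holomorphicity of the limit in the sufficiency direction via Vitali's theorem, which rests on the same type of estimate showing that $\{e^{-At}Q\circ\varphi_t\}$ is locally uniformly bounded on $D$.
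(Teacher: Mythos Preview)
Your argument is correct and is essentially the approach the paper has in mind. The paper does not spell out a proof of Proposition~\ref{prop1}; it merely says the result ``is based on'' Lemma~\ref{lem1}, which states that if $g\in\Holo(D,\C^n)$ vanishes to order $m>\lambda(A)$ at the origin then $e^{-At}g(\v_t(z))\to 0$. Your necessity argument is exactly this: taking $Q$ to be the degree-$k$ Taylor polynomial of $h$, the remainder $g=Q-h$ vanishes to order $k+1$, and your spectral condition $(k+1)\alpha>\beta$ is precisely $k+1>\lambda(A)$ once one unwinds the definitions of $\alpha,\beta$. Your sufficiency argument (the semigroup substitution $t\mapsto t+s$) is the same standard step the paper uses elsewhere (e.g.\ in the proof of Proposition~\ref{prop-normal2}).
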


This proposition is based on the following notation and lemma.

By $\lambda(A)$ we denote the spectrum distortion index of the
matrix $A$, i.e.,
\[
\lambda(A):=\frac{\max\limits_{\alpha\in\sigma(A)}|\Re
\alpha|}{\min\limits_{\alpha\in\sigma(A)}|\Re \alpha|}\,.
\]
\begin{lemma}[see \cite{E-R-S-2004}]\label{lem1}
Let $g\in\Holo(D,\C^n)$ admit the expansion:
$g(z)=\sum\limits_{\ell\ge m}Q_\ell(z)$, where $Q_\ell$ is a
homogenous polynomial of order~$\ell$ and $m>\lambda(A)$. Then
\[
\lim_{t\to\infty}e^{-At}g(\v_t(z))=O,\quad\mbox{for all }\ z\in D.
\]
\end{lemma}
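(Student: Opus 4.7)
The plan is to combine three ingredients -- a polynomial-times-exponential growth bound for $\|e^{-At}\|$, an exponential decay rate for $\|\v_t(z)\|$ governed by the spectral gap of $A$, and the vanishing of $g$ to order at least $m$ at the origin -- and show that together they force $e^{-At}g(\v_t(z)) \to O$.

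First I would set $a := \min_{\alpha \in \sigma(A)}|\Re\alpha|$ and $b := \max_{\alpha \in \sigma(A)}|\Re\alpha|$, so that $a>0$ and $\lambda(A) = b/a$, and by hypothesis $b < ma$. By continuity I can pick $\epsilon\in(0,a)$ so small that $b < m(a-\epsilon)$. A Jordan decomposition of $-A$, whose spectrum has real part in $[a,b]$, supplies the standard bound
\[
\|e^{-At}\| \le C_1 (1+t)^{n-1} e^{bt}, \qquad t \ge 0.
\]
On the other side, the expansion $g(z)=\sum_{\ell\ge m}Q_\ell(z)$ yields the Taylor estimate $\|g(w)\|\le C_3\|w\|^m$ for $w$ in a neighborhood of $O$.

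The main task -- and the step I expect to be most delicate -- is obtaining the decay estimate $\|\v_t(z)\|\le C(z)\, e^{-(a-\epsilon)t}$. To that end I would introduce an inner product $\langle\cdot,\cdot\rangle_*$ on $\C^n$ adapted to the Jordan form of $A$ so that $\Re\langle Aw,w\rangle_* \le -(a-\epsilon/2)\|w\|_*^2$ for every $w\in\C^n$; this is a classical construction obtained by an exponential rescaling along each Jordan chain. Writing $f(w) = Aw + r(w)$ with $\|r(w)\|_* = O(\|w\|_*^2)$ near $O$, the Cauchy problem $\dot\v_t = f(\v_t)$ gives
\[
\frac{d}{dt}\|\v_t(z)\|_*^2 = 2\Re\langle f(\v_t(z)),\v_t(z)\rangle_* \le -2(a-\epsilon)\|\v_t(z)\|_*^2
\]
as soon as $\v_t(z)$ lies in a sufficiently small ball around $O$. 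Since $O$ is globally attracting, this inequality holds for all $t \ge T(z)$, and Gronwall's inequality then produces the claimed exponential decay.

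Putting the three ingredients together, for $t\ge T(z)$ one obtains
\[
\|e^{-At}g(\v_t(z))\| \le C_1(1+t)^{n-1}e^{bt}\cdot C_3\, C(z)^m\, e^{-m(a-\epsilon)t} = C'(1+t)^{n-1}\,e^{(b-m(a-\epsilon))t},
\]
and the exponent is strictly negative by the choice of $\epsilon$, so the right-hand side tends to zero. The only non-routine step is the construction of the adapted inner product together with the Gronwall control of the nonlinear remainder $r$ along the trajectory; everything else is a mechanical combination of the spectral bound on $e^{-At}$ with the Taylor estimate on $g$.
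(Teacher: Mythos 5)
Your argument is correct. Note that the paper itself gives no proof of this lemma --- it simply cites \cite{E-R-S-2004} --- so there is nothing in the text to compare against; but your route (the Jordan-form bound $\|e^{-At}\|\le C_1(1+t)^{n-1}e^{bt}$, the order-$m$ Taylor estimate $\|g(w)\|\le C_3\|w\|^m$ near $O$, and the exponential decay $\|\v_t(z)\|\le C(z)e^{-(a-\epsilon)t}$ obtained from an adapted inner product and a Gronwall argument) is exactly the standard proof one expects behind that citation, and the exponent bookkeeping $b<m(a-\epsilon)$ closes the argument. The only points worth making explicit in a final write-up are that the two norms involved are equivalent (so switching between $\|\cdot\|$ and $\|\cdot\|_*$ only changes constants) and that $T(z)$ should be taken large enough that $\v_t(z)$ also lies in the neighborhood where the homogeneous expansion of $g$ converges; both are routine.
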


In many cases (and always --- in the one dimensional case), a
polynomial $Q$ in Proposition~\ref{prop1} can be chosen to be the
identity mapping, $Q(z)=z$ for all $z$. Moreover, in this case
$h\left(\v_t(z)\right)=e^{At}h(z)$, i.e., the mapping
$h(z)=\lim\limits_{t\to\infty}e^{At}\v(z)$ forms a conjugation of
a given semigroup $\left\{\v_t\right\}_{t\ge0}$ with the linear
semigroup $\left\{e^{At}\right\}_{t\ge0}$.

\begin{definition}
Let $\mathcal{S}=\{\v_t\}_{t\ge0}$ be a continuous one-parameter
semigroup of holomorphic self-mappings on a domain $D\subset\C^n$.
We say that $\mathcal{S}$ is  {\sl normally linearizable} if the
limit
\[
h(z)=\lim_{t\to\infty}e^{-At}\v_t(z),\quad z\in D,
\]
exists.
\end{definition}

A consequence of Lemma~\ref{lem1} is the following assertion.

\begin{proposition}\label{prop-normal1}
Let $\mathcal{S}=\{\v_t\}_{t\ge0}$ be a one-parameter semigroup
of\,\ holomorphic self-mappings on a domain $D\subset\C^n$
generated by $f\in\Holo(D,\C^n)$. If $f$ admits the expansion on
the series of homogenous polynomials:
$f(z)=Az+\sum\limits_{\ell\ge m}Q_\ell(z)$, where $Q_\ell$ is a
homogenous polynomial of order~$\ell$ and $m>\lambda(A)$, then the
semigroup $\mathcal{S}$ is normally linearizable.
\end{proposition}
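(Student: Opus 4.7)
The plan is to show directly that $h_t(z) := e^{-At}\v_t(z)$ converges as $t \to \infty$, uniformly on compact subsets of $D$. Set $g(z) := f(z) - Az = \sum_{\ell \ge m} Q_\ell(z)$. Differentiating using the Cauchy problem (\ref{cauchy}),
\[
\frac{\partial h_t(z)}{\partial t} = e^{-At}\bigl(f(\v_t(z)) - A\v_t(z)\bigr) = e^{-At}\, g(\v_t(z)),
\]
and combining with the semigroup identity $\v_{s+t} = \v_t \circ \v_s$, a short computation yields
\[
h_{s+t}(z) - h_s(z) = \int_s^{s+t} e^{-Au}\, g(\v_u(z))\, du.
\]
Consequently, existence of $\lim_{t\to\infty} h_t(z)$ reduces to proving that $u \mapsto \|e^{-Au}\, g(\v_u(z))\|$ is integrable on $[0,\infty)$, locally uniformly in $z \in D$.

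To obtain this integrability, I would work in a norm on $\C^n$ adapted to the Jordan decomposition of $A$. Choose reals $a,b$ with $\min_{\alpha\in\sigma(A)}|\Re \alpha| > a$ and $\max_{\alpha\in\sigma(A)}|\Re \alpha| < b$; the hypothesis $m > \lambda(A) = b/a$ permits these choices so that $ma > b$. In the adapted norm $\|e^{At}\| \le C_1 e^{-at}$ and $\|e^{-At}\| \le C_2 e^{bt}$. Since $\v_t$ is of dilation type, for each compact $K \Subset D$ one can pick $t_0$ such that $\v_{t_0}(K)$ lies in a small neighborhood $U$ of the origin on which the higher-order remainder $g$ is dominated by the linear part $Az$; a Gronwall-type argument in $U$ then yields $\|\v_t(z)\| \le C_K e^{-at}$ for $z \in K$ and $t \ge t_0$. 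Since $g$ vanishes at $O$ to order $m$, this in turn gives $\|g(\v_t(z))\| \le C'_K e^{-mat}$ and hence
\[
\|e^{-At}\, g(\v_t(z))\| \le C''_K e^{(b - ma)t}, \qquad b - ma < 0,
\]
which is integrable on $[0,\infty)$. The Cauchy-type estimate on $h_{s+t}-h_s$ then produces the desired limit, and Weierstrass' theorem makes $h(z) := \lim_{t\to\infty} h_t(z)$ holomorphic.

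The main obstacle I anticipate is the quantitative decay estimate $\|\v_t(z)\| \le C_K e^{-at}$ on compacts, since Lemma~\ref{lem1} is only stated as pointwise convergence to zero. The natural remedy, as indicated, is a Gronwall-type inequality for $\|\v_t(z)\|$ derived from (\ref{cauchy}) in the adapted Jordan norm, exploiting the order-$m$ vanishing of $g$; once this estimate is secured, the remaining steps — the differential identity for $\partial_t h_t$, the Cauchy integral representation, and the local-uniform passage to the limit via Weierstrass — are routine consequences of the setup.
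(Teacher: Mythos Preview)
Your argument is correct and is the natural way to make precise the paper's one-line proof, which says only that the proposition is ``a consequence of Lemma~\ref{lem1}.'' The exponential decay $\|e^{-Au}g(\v_u(z))\|\le C\,e^{(b-ma)u}$ that you derive (via the Gronwall step you rightly flag as the crux) is exactly the quantitative content underlying Lemma~\ref{lem1} in the cited reference \cite{E-R-S-2004}, and once it is in hand your integral representation of $h_{s+t}-h_s$ finishes the job.
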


In contrast with the one-dimensional case, for $n>1$ there are
semigroups which are not normally linearizable.

\begin{example}\label{ex2}
Let $\{\v_t\}_{t\ge0}$ be a semigroup in $\mathbb{C}^2$ defined by
\begin{eqnarray*}
\v_t(z_1,z_2)=\left( \begin{array}{c} z_1\exp\left(-(1+i)t\right) \vspace{2mm}\\
\Bigl[a{z_1}^2i\left(e^{-it}-1\right)+z_2\Bigr] e^{-(2+i)t}
\end{array}\right).
\end{eqnarray*}

It is easy to see that
\[
\lim_{t\to\infty}e^{-At}\v_t(z)=\lim_{t\to\infty}\left(\begin{array}{c}
z_1 \vspace{2mm}\\ a{z_1}^2i(\exp(-it)-1)+z_2 \end{array}\right)
\]
does not exist. Thus, this semigroup is not normally linearizable.

Just differentiating $\v_t$ at $t=0^+$ we find the semigroup
generator:
\begin{eqnarray*}
f(z_1,z_2)=\left( \begin{array}{c}  -(1+i)z_1 \vspace{2mm}\\
-(2+i)z_2+a{z_1}^2 \end{array}\right).
\end{eqnarray*}
For this generator we have $\lambda(A)=m=2$, i.e., $f$ does not
satisfy the conditions of Proposition~\ref{prop-normal1}.
\end{example}

\begin{proposition}\label{prop-normal2}
Let $D\subset \C^n$ be a domain containing $O$. Let
$\left\{\v_t\right\}$ be a continuous dilation semigroup which is
normally linearizable. If for some $t_0>0$ the semigroup element
$\v_{t_0}$ is a linear map, then all the elements $\v_t,\ t\ge0,$
are linear.
\end{proposition}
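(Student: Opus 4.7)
The plan is as follows. Since the semigroup is normally linearizable, $h(z) := \lim_{t\to\infty} e^{-At}\v_t(z)$ exists; since $\v_{t_0}$ is linear and $d(\v_{t_0})_O = e^{A t_0}$, the linearity forces $\v_{t_0}(z) = e^{A t_0}z$ on $D$. I aim to show the generator $f$ coincides with its linear part $Az$ near $O$; the identity principle for holomorphic maps then upgrades this to $f(z)=Az$ on all of $D$, and uniqueness for the Cauchy problem \eqref{cauchy} yields $\v_t(z) = e^{At}z$ on $D$ for every $t\ge 0$.

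Assume, for a contradiction, that the Taylor expansion $f(z) = Az + \sum_{k\ge 2} F_k(z)$ has a smallest index $m\ge 2$ with $F_m \not\equiv 0$, and expand $\v_t(z) = e^{At}z + \sum_{k\ge m}\Phi_{t,k}(z)$. Matching the degree-$m$ homogeneous parts in $\partial_t \v_t = f(\v_t)$ yields the linear inhomogeneous ODE
\begin{equation*}
\partial_t \Phi_{t,m}(z) = A\Phi_{t,m}(z) + F_m(e^{At}z),\qquad \Phi_{0,m}(z) = 0,
\end{equation*}
with explicit solution $\Phi_{t,m}(z) = \int_0^t e^{A(t-s)} F_m(e^{As}z)\,ds$.

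Now I impose both hypotheses. Passing to an eigenbasis of $A$ with eigenvalues $\alpha_1,\dots,\alpha_n$ and writing $F_m^{(j)}(z) = \sum_{|\nu|=m} c_\nu^{(j)} z^\nu$, each component of $e^{-At}\Phi_{t,m}(z)$ decomposes into a sum of monomials $z^\nu$ with coefficient a multiple of $(e^{t(\langle\nu,\alpha\rangle-\alpha_j)}-1)/(\langle\nu,\alpha\rangle-\alpha_j)$ (or $t$ at exact resonance $\langle\nu,\alpha\rangle = \alpha_j$). The existence of $\lim_{t\to\infty}e^{-At}\Phi_{t,m}(z)$, guaranteed by normal linearizability, forces every nonzero $c_\nu^{(j)}$ to satisfy the strict inequality $\Re(\langle\nu,\alpha\rangle-\alpha_j) < 0$. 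The vanishing $\Phi_{t_0,m}\equiv 0$, forced by linearity of $\v_{t_0}$, instead forces $e^{t_0(\langle\nu,\alpha\rangle-\alpha_j)} = 1$, i.e., $\langle\nu,\alpha\rangle-\alpha_j$ is a nonzero element of $(2\pi i/t_0)\Z$ and hence purely imaginary. These two requirements are incompatible, so every $c_\nu^{(j)}$ vanishes, contradicting $F_m\not\equiv 0$.

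The main obstacle I anticipate is the non-diagonalizable case. Jordan blocks introduce polynomial factors in $t$ and $s$ inside $e^{At}$ and hence in the integrand, so $\Phi_{t,m}$ acquires polynomial correction terms. The underlying dichotomy --- existence of the $t\to\infty$ limit needs the exponential rate to have strictly negative real part, while vanishing at $t=t_0$ needs it purely imaginary --- survives, but isolating it cleanly requires careful generalized-eigenspace bookkeeping, which I would streamline by invoking Lemma~\ref{lem1} and its proof technique to discard the polynomial factors before comparing exponential rates.
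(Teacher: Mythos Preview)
Your approach is correct in spirit but takes a substantially more laborious route than the paper's. You expand $f$ and $\v_t$ in homogeneous components, solve the linear ODE for $\Phi_{t,m}$, and then confront the two constraints (existence of the $t\to\infty$ limit versus vanishing at $t_0$) monomial by monomial. This works in the diagonalizable case, and your dichotomy---``limit exists'' forces $\Re(\langle\nu,\alpha\rangle-\alpha_j)<0$, while ``vanishes at $t_0$'' forces this quantity purely imaginary---is the right mechanism. Two caveats: you implicitly assume that convergence of $e^{-At}\v_t(z)$ implies convergence of each homogeneous part $e^{-At}\Phi_{t,m}(z)$, which requires local uniform convergence (so that Taylor coefficients pass to the limit); and, as you yourself flag, the non-diagonalizable case introduces polynomial-in-$t$ factors that you do not fully handle.

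The paper bypasses all of this. It never touches the Taylor expansion of $f$ or $\v_t$; it simply observes that the normally linearizing map $h$ satisfies $h\circ\v_s=e^{As}h$ and then \emph{computes $h$ along the discrete subsequence $t=nt_0$}: since $\v_{t_0}=e^{At_0}$ gives $\v_{nt_0}=e^{Ant_0}$, one gets $h(z)=\lim_{n\to\infty}e^{-Ant_0}\v_{nt_0}(z)=z$, so $h=\id$ and hence $\v_s=e^{As}$ for all $s$. This two-line argument is insensitive to the Jordan structure of $A$, needs no monomial bookkeeping, and requires no separate treatment of resonances. Your method, by contrast, proves more along the way (it identifies exactly which monomials in $f$ are obstructed and why), which is useful for understanding examples like Example~\ref{ex2}, but for the bare statement of the proposition the paper's argument is decisively simpler.
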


\begin{proof}
Denote $h(z):=\lim\limits_{t\to\infty}e^{-At}\v_t(z)$. Then for
all $s>0$ obviously
\[
h(\v_s(z)):=e^{As}\lim\limits_{t\to\infty}e^{-A(t+s)}\v_t(\v_s(z))=e^{As}h(z),
\]
i.e., $h$ is a linearizing conjugation for
$\left\{\v_t\right\}_{t\ge0}$. Since $\v_{t_0}=e^{At_0}$, we have
$\v_{t_0n}=e^{At_0n}$ and
\[
h(z):=\lim_{n\to\infty}e^{-At_0n}\v_{t_0n}(z)=z,
\]
so $h$ is the identity mapping. Therefore, $\v_s(z)=
h^{-1}\left(e^{As}h(z)\right)=e^{As}z$ for all $s\ge0$.
\end{proof}

Example~\ref{ex2} above shows that this fact is not generally
true. Indeed, for each $t_{\ell}=2\pi\ell,\ \ell\in\mathbb{Z},$
the semigroup element $\v_{t_{\ell}}$ is a linear mapping. Yet all
other elements $\v_t,\ t\not=2\pi\ell,$ are not linear.

\vspace{3mm}

An additional problem is that that with exception of the
one-dimensional case, linearizing conjugations may not be unique.

\begin{definition}
Let $\mathcal{F}=\left\{\v_s\right\}_{s\in\mathcal{A}}$ be a
family of holomorphic self-mappings of $D$. We say that
$\mathcal{F}$ is  {\sl uniquely linearizable} if there is a unique
mapping $h$ biholomorphic in $D$ and normalized by $h(O)=O,\
dh_O=\id$, such that
\[
h\circ\v_s =B_s\circ h ,\quad s\in\mathcal{A},
\]
where $\left\{B_s\right\}_{s\in\mathcal{A}}$ is an appropriate
family of linear operators on $\C^n$.
\end{definition}

\begin{remark}
Actually, it follows by the chain rule that $B_s=d(\v_s)_O$.
\end{remark}

\begin{remark}
A family $\mathcal{F}$ may consist of a single mapping
$F\in\Holo(D)$ as well as a discrete or continuous semigroup of
holomorphic self-mappings on $D$.
\end{remark}

Our next example shows that even linear diagonal mappings may not
be uniquely linearizable.

\begin{example}\label{ex3}
Consider a linear mapping $\psi=(\psi_1,\psi_2)$ with
\begin{eqnarray*}
\psi_1(z_1,z_2)= \frac{z_1}2, \qquad \psi_2(z_1,z_2)= \frac{z_2}4
\end{eqnarray*}
and a holomorphic normalized mapping defined by
\[
h(z_1,z_2)= \left(\begin{array}{c} z_1\vspace{2mm} \\
{z_1}^2+z_2 \end{array}\right).
\]
Then $h\circ\psi=\psi\circ h$, i.e., $h$ and also the identity
mapping $\id$ linearize $\psi$.
\end{example}

Actually, the question whether a linear mapping $\psi(z)=Bz$ is
uniquely linearizable can be formulated as the following rigidity
problem:

{\it When do the conditions
\[
Q\circ B=B\circ Q\quad\mbox{and} \quad Q'(O)=O
\]
on a holomorphic mapping $Q$ imply that $Q\equiv O\ $?}

\begin{remark}
In fact, it can be seen that if a matrix $B$ is diagonalazable and
$\sigma(B)=\{\beta_1,\ldots,\beta_n\}\subset\Delta$, then $\psi$
is uniquely linearizable if and only if
${\beta_1}^{k_1}\cdot{\beta_2}^{k_2}\cdot\ldots\cdot{\beta_n}^{k_n}\neq\beta_j$
for all $j=1,\ldots,n$ and $k\in\N^n$.
\end{remark}

\begin{theorem}\label{commut-th}
Let $D\subset \C^n$ be a domain containing $O$. Let
$\mathcal{S}=\left\{\v_t\right\}_{t\ge0}$ be a continuous
semigroup of dilation type, and let $\psi$ be a holomorphic
self-mapping of $D$ commuting with $\mathcal{S}$ such that
\begin{equation}\label{commut}
\psi\circ\v_t=\v_t\circ\psi
\end{equation}
for all $t\ge0$. If $\psi$ is uniquely linearizable by a
biholomorphic mapping $h:\,D\mapsto\C^n$, then all of the elements
of the semigroup $\mathcal{S}$ are linearizable by the same
mapping $h$.
\end{theorem}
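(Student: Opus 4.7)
The plan is a direct commutative-diagram chase that reduces everything to the uniqueness of the linearization of $\psi$. Set $A := df_O$, where $f$ is the generator of $\mathcal{S}$, and set $B := d\psi_O$. The hypothesis asserts that $h$ is the unique biholomorphic mapping on $D$, normalized by $h(O)=O$ and $dh_O=\id$, satisfying $h\circ\psi = B\circ h$.

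First I would record that $A$ and $B$ commute. Differentiating (\ref{commut}) at $O$ yields $B\,e^{At} = e^{At}\,B$ for every $t\ge 0$, and a further differentiation in $t$ at $t=0$ gives $AB=BA$; in particular, $e^{-At}$ commutes with $B$ for all $t\ge 0$.

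For each $t\ge 0$ I introduce the candidate linearization
\[
h_t := e^{-At}\circ h\circ\v_t.
\]
Since $\mathcal{S}$ is a continuous semigroup arising from the Cauchy problem (\ref{cauchy}), each $\v_t$ is univalent on $D$, so $h_t$ is biholomorphic from $D$ onto its image in $\C^n$. A short computation gives $h_t(O)=O$ and $d(h_t)_O = e^{-At}\cdot\id\cdot e^{At}=\id$, so $h_t$ meets the normalization in the definition of unique linearizability. Moreover, $h_t$ also linearizes $\psi$ with the same operator $B$:
\[
h_t\circ\psi = e^{-At}\circ h\circ\v_t\circ\psi = e^{-At}\circ h\circ\psi\circ\v_t = e^{-At}\circ B\circ h\circ\v_t = B\circ h_t,
\]
where the second equality uses (\ref{commut}), the third uses $h\circ\psi=B\circ h$, and the fourth uses $e^{-At}B=Be^{-At}$.

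By the unique linearizability of $\psi$ this forces $h_t = h$ for every $t\ge 0$, i.e.,
\[
h\circ\v_t = e^{At}\circ h,
\]
which is precisely the linearization of $\mathcal{S}$ by $h$, with linear part $d(\v_t)_O=e^{At}$ (consistent with the remark after the definition). The only point that demands some care is verifying that $h_t$ is biholomorphic on all of $D$, rather than merely on a neighborhood of $O$; this rests on the univalence of each element of a continuous semigroup of dilation type. Apart from that, the argument is a brief diagram chase.
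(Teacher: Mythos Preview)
Your argument is correct and follows essentially the same route as the paper: introduce $h_t=e^{-At}\circ h\circ\v_t$, verify it is normalized and linearizes $\psi$ via the commutation $Be^{-At}=e^{-At}B$, and invoke unique linearizability to conclude $h_t=h$. The only addition is your explicit remark about the univalence of $\v_t$ (hence biholomorphicity of $h_t$), a point the paper passes over silently.
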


\begin{proof} Let $B$ denote a linear operator on $\C^n$ defined by
$B=d\psi_O$. Also we denote $A=df_O$, where $f$ is the
infinitesimal generator of the semigroup $\mathcal{S}$. First, by
differentiating (\ref{commut}) at $O$ we obtain
$\left(d\psi_O\right)\circ e^{At}=e^{At}\circ d\psi_O$, i.e., $B$
commutes with the linear semigroup $\left\{e^{At}\right\}_{t\ge0}\
$ (in fact, $B$ commutes with $A$).

By our assumption, $h\circ\psi=B\circ h$. Therefore, for all
$t\ge0$ we have
\[
h\circ\psi\circ\v_t=B\circ h\circ\v_t.
\]
On the other hand, $h\circ\psi\circ\v_t= h\circ\v_t\circ\psi$ by
(\ref{commut}). Thus,
\[
e^{-At}\circ h\circ\v_t\circ\psi=e^{-At}\circ B\circ h\circ\v_t=
B\circ e^{-At}\circ h\circ\v_t.
\]

Denoting $h_1:=e^{-At}\circ h\circ\v_t$ one rewrites the latter
equality in the form
\[
h_1\circ\psi = B\circ h_1 .
\]
Since $h_1(O)=O,\ d(h_1)_O=\id$ and $\psi$ is uniquely
linearizable by $h$, we conclude that $h_1=e^{-At}\circ
h\circ\v_t=h$, or
\[
h\circ\v_t=e^{At}\circ h.
\]
The proof is complete.
\end{proof}

\begin{corollary}\label{cor4}
Let $D\subset \C^n$ be a domain containing $O$. Let
$\mathcal{S}=\left\{\v_t\right\}_{t\ge0}$ be a continuous
semigroup of dilation type. If there exists $t_0>0$ such that
$\v_{t_0}$ is uniquely linearizable by a biholomorphic mapping
$h:\,D\mapsto\C^n$, then all the elements of $\mathcal{S}$ are
linearizable by the same mapping $h$ which is a unique solution of
the differential equation (\ref{dif-schroder})
\[
dh_z f(z)=A h(z),
\]
normalized by the conditions $h(O)=O,\ dh_O=\id$.
\end{corollary}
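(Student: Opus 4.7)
The plan is to derive this corollary directly from Theorem~\ref{commut-th} by taking the commuting map $\psi$ there to be the semigroup element $\v_{t_0}$ itself. Since $\v_{t_0}\in\mathcal{S}$, the abelian semigroup law gives $\v_{t_0}\circ\v_t=\v_{t_0+t}=\v_t\circ\v_{t_0}$, so $\v_{t_0}$ commutes with every element of $\mathcal{S}$. Combined with the hypothesis that $\v_{t_0}$ is uniquely linearizable by $h$, Theorem~\ref{commut-th} then yields $h\circ\v_t=e^{At}\circ h$ for all $t\ge 0$, which is the first part of the conclusion.

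Next, I would pass from the functional equation to the differential equation (\ref{dif-schroder}) by differentiating the identity $h(\v_t(z))=e^{At}h(z)$ with respect to $t$ at $t=0^+$, using the Cauchy problem (\ref{cauchy}) on the left and $\frac{d}{dt}e^{At}|_{t=0}=A$ on the right. This produces $dh_z f(z)=Ah(z)$, so $h$ is a solution of (\ref{dif-schroder}). The normalization $h(O)=O$, $dh_O=\id$ is built into the definition of unique linearizability, so nothing more is needed there.

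For the uniqueness, the plan is to suppose $\tilde h$ is another normalized holomorphic solution of (\ref{dif-schroder}) and reverse the differentiation: the curve $t\mapsto \tilde h(\v_t(z))$ solves the linear Cauchy problem $\dot u=Au$, $u(0)=\tilde h(z)$, whose unique solution is $e^{At}\tilde h(z)$. Specializing to $t=t_0$ then gives $\tilde h\circ\v_{t_0}=e^{At_0}\circ\tilde h$, and unique linearizability of $\v_{t_0}$ by $h$ forces $\tilde h=h$.

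I do not anticipate a serious obstacle; the corollary is essentially Theorem~\ref{commut-th} packaged with the standard equivalence, noted earlier in the paper, between the Schr\"oder functional equation (\ref{schroder}) and the differential equation (\ref{dif-schroder}). The one point requiring some care is to keep track that unique linearizability of the single map $\v_{t_0}$ plays a double role: it supplies the hypothesis of Theorem~\ref{commut-th} needed to produce a common conjugation for the whole semigroup, and it also furnishes the uniqueness at the level of Schr\"oder's equation for $\v_{t_0}$, which then propagates to uniqueness of the normalized solution of (\ref{dif-schroder}).
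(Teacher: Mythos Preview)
Your proposal is correct and follows exactly the route the paper intends: the corollary is stated without proof as an immediate consequence of Theorem~\ref{commut-th}, applied with $\psi=\v_{t_0}$, and your write-up simply fills in the details of that application together with the already-noted equivalence between (\ref{schroder}) and (\ref{dif-schroder}). One small point to keep straight in the uniqueness step is that the definition of ``uniquely linearizable'' in the paper quantifies over \emph{biholomorphic} normalized intertwiners, so your competing solution $\tilde h$ should be taken biholomorphic on $D$ as well; with that understood, the argument goes through verbatim.
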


\begin{corollary}
Let $D\subset \C^n$ be a domain containing $O$. Let
$\mathcal{S}_1=\left\{\v_t\right\}_{t\ge0}$ and
$\mathcal{S}_2=\left\{\psi_t\right\}_{t\ge0}$ be two continuous
semigroups on $D$ generated by mappings $f_1$ and $f_2$,
respectively. Suppose that $d(f_1)_O=d(f_2)_O=A$ with
$\Re\sigma(A)<0$ and that there exists $s_0>0$ such that

(i) $\psi_{s_0}$ is uniquely linearizable and

(ii) $\psi_{s_0}$ commutes with the semigroup $\mathcal{S}_1$ such
that $\psi_{s_0}\circ\v_t=\v_t\circ\psi_{s_0}$ for all $t\ge0$.

Then the semigroups coincide.
\end{corollary}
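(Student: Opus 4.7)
The plan is to combine Corollary~\ref{cor4} applied to $\mathcal{S}_2$ with Theorem~\ref{commut-th} applied to $\mathcal{S}_1$ (using $\psi_{s_0}$ as the commuting element), producing a single biholomorphism $h$ that linearizes both semigroups to the linear semigroup $\{e^{At}\}_{t\ge0}$; injectivity of $h$ then forces $\v_t=\psi_t$ for all $t\ge0$.

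First I would record preliminaries. Since $d(f_1)_O=d(f_2)_O=A$ with $\Re\sigma(A)<0$, both $\mathcal{S}_1$ and $\mathcal{S}_2$ are of dilation type, and differentiating the Cauchy problem~(\ref{cauchy}) at the origin yields $d(\v_t)_O=d(\psi_t)_O=e^{At}$ for every $t\ge0$. In particular $d(\psi_{s_0})_O=e^{As_0}$, so the unique biholomorphism $h:D\to\C^n$ with $h(O)=O,\ dh_O=\id$ linearizing $\psi_{s_0}$ (hypothesis~(i)) satisfies $h\circ\psi_{s_0}=e^{As_0}\circ h$.

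Next, since $\psi_{s_0}\in\mathcal{S}_2$ and $\mathcal{S}_2$ is a dilation semigroup, Corollary~\ref{cor4} applied to $\mathcal{S}_2$ yields
\[
h\circ\psi_t = e^{At}\circ h, \qquad t\ge0.
\]
On the other hand, hypothesis~(ii) together with Theorem~\ref{commut-th} applied to $\mathcal{S}_1$ with $\psi=\psi_{s_0}$ (uniquely linearizable by the very same $h$) gives
\[
h\circ\v_t = e^{At}\circ h, \qquad t\ge0.
\]
Comparing the two identities produces $h\circ\v_t = h\circ\psi_t$, and injectivity of the biholomorphism $h$ forces $\v_t=\psi_t$ for every $t\ge0$, so $\mathcal{S}_1=\mathcal{S}_2$.

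The argument is essentially an orchestration of the two preceding results, so the only point one must verify is that the same normalized $h$ feeds both applications. This is guaranteed by the matching of linear parts $d(\v_t)_O=d(\psi_t)_O=e^{At}$ together with the dilation-type hypothesis $\Re\sigma(A)<0$, so I do not anticipate a genuine obstacle beyond this bookkeeping.
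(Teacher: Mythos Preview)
Your proof is correct and follows essentially the same route as the paper: apply Corollary~\ref{cor4} to $\mathcal{S}_2$ and Theorem~\ref{commut-th} to $\mathcal{S}_1$ (with $\psi=\psi_{s_0}$) to obtain $h\circ\psi_t=e^{At}\circ h=h\circ\v_t$, then use injectivity of $h$. The paper phrases the final step as $\psi_t=h^{-1}\circ(e^{At}\circ h)=\v_t$, but this is the same conclusion.
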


\begin{proof}
By our assumption, there is a unique biholomorphic mapping $h$
normalized by $h(O)=O,\ dh_O=\id$, such that
\[
h\circ\psi_{s_0} =e^{As_0}\circ h .
\]
Then Theorem \ref{commut-th} (or \,Corollary~\ref{cor4}) implies
that $h\circ\psi_s =e^{As}\circ h$ for all $s\ge0$. Since the
mapping $h$ is biholomorphic, we have:
\[
\psi_s=h^{-1}\circ\left(e^{As}\circ h\right).
\]

The commutativity of the mapping $\psi_{s_0}$ and the semigroup
$\mathcal{S}_1$ implies by the same Theorem~\ref{commut-th} that
all of the elements of $\mathcal{S}_1$ are linearizable by the
mapping $h$, that is, $h\circ\v_t =e^{At}\circ h$ for all $t\ge0$.
Thus
\[
\v_t=h^{-1}\circ\left(e^{At}\circ h\right).
\]
\end{proof}

\begin{remark}
If the semigroups $\mathcal{S}_1=\left\{\v_t\right\}_{t\ge0}$ and
$\mathcal{S}_2=\left\{\psi_t\right\}_{t\ge0}$ commute in the
sense: $\v_t\circ\psi_s=\psi_s\circ\v_t$ for all $t,s\ge0$, then
the conclusion that they coincide holds under a formally weaker
than condition (i) requirement that differential equation
(\ref{dif-schroder}) has a unique solution normalized by
${h(O)=O,}\ dh_O=\id$.
\end{remark}

\begin{corollary}
Let $D\subset \C^n$ be a domain containing $O$. Let
$\mathcal{S}_1=\left\{\v_t\right\}_{t\ge0}$ and
$\mathcal{S}_2=\left\{\psi_t\right\}_{t\ge0}$ be two commuting
semigroups on $D$ generated by mappings $f_1$ and $f_2$,
respectively. Suppose that $d(f_1)_O=d(f_2)_O=A$ with
$\Re\sigma(A)<0$. If $\lambda(A)<2$ then the semigroups coincide.
\end{corollary}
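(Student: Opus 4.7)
The plan is to combine Proposition~\ref{prop-normal1} (which produces normal linearizations of both semigroups) with Lemma~\ref{lem1} (which, once set up correctly, yields \emph{uniqueness} of such a linearization) and then to exploit the commutativity hypothesis to transfer the linearization of $\mathcal{S}_1$ over to $\mathcal{S}_2$. Since $d(f_j)_O=A$ for $j=1,2$, both generators admit expansions $f_j(z)=Az+\sum_{\ell\ge 2}Q_\ell^{(j)}(z)$, so the hypothesis $\lambda(A)<2$ places us in the range $m=2>\lambda(A)$ of Proposition~\ref{prop-normal1}. Consequently the limits
\[
h_1(z):=\lim_{t\to\infty}e^{-At}\v_t(z),\qquad h_2(z):=\lim_{t\to\infty}e^{-At}\psi_t(z),\qquad z\in D,
\]
exist and define holomorphic mappings satisfying $h_j(O)=O$, $d(h_j)_O=\id$, together with the Schr\"oder equations $h_1\circ\v_t=e^{At}\circ h_1$ and $h_2\circ\psi_t=e^{At}\circ h_2$.

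Next I would prove the uniqueness statement flagged in the remark preceding the corollary: any holomorphic $\tilde h$ normalized by $\tilde h(O)=O$, $d\tilde h_O=\id$ and satisfying $\tilde h\circ\v_t=e^{At}\circ\tilde h$ for every $t\ge0$ must coincide with $h_1$. Setting $g:=\tilde h-h_1$, the map $g$ vanishes to order at least $2$ at $O$, while $g\circ\v_t=e^{At}\circ g$; rewriting this as $g(z)=e^{-At}g(\v_t(z))$ and applying Lemma~\ref{lem1} to $g$ with $m=2>\lambda(A)$ forces the right-hand side to tend to $O$ as $t\to\infty$, hence $g\equiv O$.

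The decisive step is the transfer of $h_1$ to $\mathcal{S}_2$. I set $H_s:=e^{-As}\circ h_1\circ\psi_s$. The commutativity $\v_t\circ\psi_s=\psi_s\circ\v_t$, combined with $h_1\circ\v_t=e^{At}\circ h_1$ and the commuting of $e^{-As}$ with $e^{At}$, gives
\[
H_s\circ\v_t=e^{-As}\circ h_1\circ\v_t\circ\psi_s=e^{-As}\circ e^{At}\circ h_1\circ\psi_s=e^{At}\circ H_s;
\]
moreover $H_s(O)=O$ and $d(H_s)_O=e^{-As}\cdot\id\cdot e^{As}=\id$ since $d(\psi_s)_O=e^{As}$. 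The uniqueness established in the previous paragraph then identifies $H_s=h_1$, i.e.\ $h_1\circ\psi_s=e^{As}\circ h_1=h_1\circ\v_s$. Because $d(h_1)_O=\id$, the mapping $h_1$ is biholomorphic on some neighborhood of $O$, so $\psi_s=\v_s$ in that neighborhood, and the identity principle extends the equality to all of the connected domain $D$.

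The hard part is the uniqueness step, which is the only place where the quantitative assumption $\lambda(A)<2$ enters (precisely so that Lemma~\ref{lem1} can be applied with $m=2$). Once uniqueness of the normalized solution to \eqref{dif-schroder} is secured, the commutativity argument is structurally identical to the proof of Theorem~\ref{commut-th}, with the roles of $\psi$ and $\mathcal{S}$ there now played by $\psi_s$ and $\mathcal{S}_1$ for each fixed $s\ge0$.
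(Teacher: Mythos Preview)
Your argument is correct and matches the paper's intended route: the corollary is stated without proof, but the remark immediately preceding it explains that under full commutativity it suffices that equation~\eqref{dif-schroder} have a unique normalized solution, and you supply exactly this---existence via Proposition~\ref{prop-normal1} and uniqueness via Lemma~\ref{lem1} (both using $2>\lambda(A)$)---before running the transfer step of Theorem~\ref{commut-th}. The map $h_2$ you define is never actually used and can be dropped.
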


\vspace{3mm}

The use of the Poincar\'e--Dulac theorem (see, for example,
\cite{Arn}) is another approach to solve a linearization problem.

For simplicity, we assume in the sequel that $A$ is a diagonal
matrix, $A=\diag(\alpha_1\ldots,\alpha_n)$ with
$\Re\alpha_n\le\ldots\le\Re\alpha_1<0$.

Let $k:=(k_1,\ldots, k_n)\in \N^n$ be such that $|k|:=\sum k_j\ge
2$.

\begin{definition}\label{resonant}
We say that $A$ is {\sl resonant} (or the $n$-tuple
$(\alpha_1,\ldots\alpha_n)$ of the eigenvalues of $A$ is resonant)
if for some $\ell=1,\ldots, n$
\[
(\alpha,k):=\sum_{j=1}^n k_j\alpha_j= \alpha_\ell.
\]
Such a relation is called a {\sl resonance}. The number $|k|$ is
called the {\sl order} of the resonance.
\end{definition}

If $\alpha_\ell=(\alpha,k)$, we call any map $G:\C^n\mapsto\C^n$
{\sl resonant monomial} if it has the form $G(z)=(g_1(z),\ldots,
g_n(z))$ with $g_j\equiv 0$ for $j\neq \ell$ and $g_\ell(z)=a
z^k$.

\begin{lemma}\label{resonances}
If $\,\Re\alpha_n\le\ldots\le\Re\alpha_1<0$ then there is at most
a finite number of resonances for $\alpha$. Moreover, if
$\alpha_j=(k,\alpha)$ then  $k_j=\ldots=k_n=0$.
\end{lemma}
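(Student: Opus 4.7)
The plan is to pass to real parts and exploit the strict sign condition $\Re\alpha_j<0$ together with the ordering $\Re\alpha_n\le\ldots\le\Re\alpha_1<0$. It is convenient to set $a_j:=-\Re\alpha_j>0$, so that the hypothesis becomes $0<a_1\le a_2\le\ldots\le a_n$.

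For the finiteness claim, I would take the real part of a resonance relation $\alpha_\ell=(k,\alpha)=\sum_{j=1}^n k_j\alpha_j$. This yields $a_\ell=\sum_{j=1}^n k_ja_j$, and since all $a_j\ge a_1$, I obtain $a_\ell\ge |k|\,a_1$. Combined with $a_\ell\le a_n$, this forces
\[
|k|\le\frac{a_n}{a_1}=\lambda(A).
\]
Thus any resonant multi-index has order bounded by $\lambda(A)$; together with $\ell\in\{1,\ldots,n\}$, only finitely many $(\ell,k)$ can give a resonance.

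For the second assertion, suppose $\alpha_j=(k,\alpha)$ with $|k|\ge 2$, and argue by contradiction: assume $k_i\neq 0$ for some index $i\ge j$. Taking real parts again gives $a_j=\sum_{l=1}^n k_la_l$. Since $a_i\ge a_j$ (because $i\ge j$) and the terms are nonnegative, I can split into two cases. If $k_i\ge 2$, then already $a_j\ge k_ia_i\ge 2a_i\ge 2a_j>a_j$, a contradiction. If $k_i=1$, then $|k|\ge 2$ forces some other $k_m\ge 1$ with $m\neq i$, so $a_j\ge a_i+k_ma_m\ge a_j+a_m>a_j$, again a contradiction. Hence $k_i=0$ for every $i\ge j$, which is the claim that $k_j=\ldots=k_n=0$.

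The arguments are short; the only thing to be careful about is the bookkeeping when switching from the complex relation among the $\alpha_j$ to a chain of inequalities among the positive reals $a_j$, in particular ensuring that the case $k_i=1$ is handled separately so that the strict inequality $|k|\ge 2$ is genuinely used.
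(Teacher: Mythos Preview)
Your proof is correct and follows exactly the approach the paper sketches: pass to real parts of the resonance relation and exploit the ordering $\Re\alpha_n\le\ldots\le\Re\alpha_1<0$. The paper's own proof is a single sentence to this effect, and your argument simply supplies the routine inequalities (including the explicit bound $|k|\le\lambda(A)$) that the paper leaves implicit.
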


\begin{proof}
Both statements follow from the simple observation that if
$\alpha_j=(k,\alpha)$, then $\Re \alpha_j=(k,\Re \alpha)$, and by
the ordering of $\alpha_j$.
\end{proof}

For simplicity of notation, let
\[
M_j:=\left\{
\begin{array}{l}
0, \quad \mbox{if there is no } k \mbox{ with } \alpha_j=(k,\alpha),\vspace{2mm}\\
\max\{|k|: \alpha_j=(\alpha,k)\}\quad \mbox{otherwise}.
\end{array}\right.
\]
and $M(\alpha):=\max\{M_j:j=1,\ldots, n\}.$

A vector polynomial map  $R:\C^n\mapsto \C^n,\ R(O)=O,$ is {\sl
triangular} if by switching coordinates $R(z)=(R_1(z),\ldots,
R_n(z))$ assumes the form
\[
R_j(z)=a_j z_j + r_j(z_1,\ldots, z_{j-1}),\quad j=1,\ldots, n
\]
where $r_j$ is a polynomial.

\begin{theorem}
Let $D\subset \C^n$ be a domain containing $O$. Let
$\{\v_t\}_{t\ge0}$ be a continuous dilation type semigroup
generated by $f\in\Holo(D,\C^n)$ with $df_O=A$. Then there exists
an injective holomorphic map $h:\,D\mapsto \C^n$ (independent of
$\ t$) such that $h(O)=O$, $dh_O=\id$ and
\[
h \circ \v_t = P_t\circ h,
\]
where $P_t(z)=e^{At}z+R_t(z)$ is a triangular polynomial group of
automorphisms of $\C^n$ whose degree is less than or equal to
$M(\alpha)$, and $R_t(z)$ containing only resonant monomials. In
particular, if there are no resonances then $\{\v_t\}_{t\ge0}$ is
linearizable.
\end{theorem}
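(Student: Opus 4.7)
The plan is to apply the Poincar\'e--Dulac theorem for holomorphic vector fields to the infinitesimal generator $f$, integrate the resulting normal form to produce a polynomial group $\{P_t\}$, and then extend the locally defined conjugation to all of $D$ by pulling back along the semigroup. Since $\Re\sigma(A)<0$, the spectrum of $A$ lies strictly in the open left half-plane, hence in the Poincar\'e domain. By the classical Poincar\'e--Dulac theorem (see \cite{Arn}), there exist a neighborhood $U$ of $O$ and a biholomorphism $h_0\colon U\to h_0(U)$, with $h_0(O)=O$ and $(dh_0)_O=\id$, conjugating $f$ to a polynomial vector field $\tilde f(w)=Aw+N(w)$, where $N$ consists only of resonant monomials in the sense of Definition~\ref{resonant}. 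Lemma~\ref{resonances} bounds $\deg N$ by $M(\alpha)$, and the constraint $k_j=\ldots=k_n=0$ whenever $\alpha_j=(\alpha,k)$ is exactly the statement that $\tilde f$ is triangular.

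Next, I would integrate $\dot w=\tilde f(w)$ explicitly. The triangular shape turns the system into a cascade of scalar linear ODEs whose forcing terms are polynomial combinations of the previously computed components multiplied by exponentials $e^{(\alpha,k)t}$, and an inductive solution preserves the degree bound, yielding a one-parameter group $P_t(w)=e^{At}w+R_t(w)$ of triangular polynomial automorphisms of $\C^n$ of degree at most $M(\alpha)$. To see that $R_t$ contains only resonant monomials, I would use the invariance identity $e^{-As}\circ\tilde f\circ e^{As}=\tilde f$, which is immediate from the definition of a resonant monomial. It implies that the flow commutes with the linear flow, so $P_t\circ e^{As}=e^{As}\circ P_t$ for all $s,t\in\R$. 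Expanding $P_t(z)=\sum_k a_k(t)\,z^k$ and matching coefficients of $z^k$ on the two sides of this commutation forces the $\ell$-th component of $a_k(t)$ to vanish unless $(\alpha,k)=\alpha_\ell$, which is precisely the resonance condition.

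Because $\{\v_t\}$ is of dilation type, for every $z\in D$ there exists $T(z)>0$ such that $\v_t(z)\in U$ for all $t\ge T(z)$. Since each $P_t$ is a polynomial automorphism of $\C^n$, it makes sense to define
\[
h(z):=P_{-t}\bigl(h_0(\v_t(z))\bigr),\qquad t\ge T(z).
\]
The local conjugation $h_0\circ\v_s=P_s\circ h_0$ on $U$, combined with the group law for $\{P_t\}$ and the semigroup law for $\{\v_t\}$, shows that $h$ is independent of the admissible choice of $t$, holomorphic on $D$, normalized by $h(O)=O$ and $dh_O=\id$, and satisfies $h\circ\v_t=P_t\circ h$ on all of $D$. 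Injectivity follows from the local invertibility of $\v_t$ near $O$ (since $d(\v_t)_O=e^{At}\in\GL(n,\C)$, so $\v_t$ is a biholomorphism between suitable neighborhoods of $O$) together with the global invertibility of $P_{-t}$ on $\C^n$.

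The main obstacle is the convergence of the formal conjugating series in the first step: this is the heart of the Poincar\'e--Dulac theorem in the Poincar\'e domain, and depends on the uniform positive lower bound for the small divisors $|(\alpha,k)-\alpha_\ell|$ off the finite resonance set, which follows from $0\notin\mathrm{conv}(\sigma(A))$. The remaining steps, namely the cascade integration in Step~2 and the well-definedness and injectivity of the pullback in Step~3, are conceptually routine once the local holomorphic normal form is in hand.
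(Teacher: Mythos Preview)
Your proposal is correct and follows the paper's three-step strategy exactly: apply Poincar\'e--Dulac to the generator, integrate the triangular normal form to obtain a polynomial group $\{P_t\}$, and globalize via $h=P_{-t}\circ h_0\circ\v_t$. The one substantive variation is your argument that $R_t$ contains only resonant monomials: the paper verifies this by carrying out the cascade integration and checking that the exponential factors coming from $r_j(x_1,\ldots,x_{j-1})$ cancel against $e^{-\alpha_j t}$ precisely because $r_j$ is resonant, whereas you invoke the symmetry $e^{-As}\circ\tilde f\circ e^{As}=\tilde f$ to get $P_t\circ e^{As}=e^{As}\circ P_t$ and read off the constraint on the Taylor coefficients. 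Your route is slicker, but be aware that the resonance of $R_t$ is also what guarantees that the cascade solution is polynomial in $z$ in the first place (and hence of degree $\le M(\alpha)$); so in your write-up the commutation argument should come \emph{before}, not after, the assertion that ``an inductive solution preserves the degree bound.'' Finally, for injectivity you really need that each $\v_t$ is injective on all of $D$ (which follows from ODE uniqueness for the generator $f$), not merely local invertibility near $O$.
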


\begin{proof}
Let $\v_t(z)=e^{At}z+\sum\limits_{|m|\ge2} P_{m,t}(z)$ be the
homogeneous expansion at $O$ (which is defined on a small ball
containing $O$ and contained in $D$). It follows from the theory
of semigroups of holomorphic maps that each $P_{m,t}(z)$ is real
analytic in $t$.

By our assumption, $A$ is diagonal and the convex hull in $\C$ of
its eigenvalues does not contain $0$. Therefore by the classical
Poincar\'e--Dulac theorem, there exist an open neighborhood $U$ of
$O$ and a holomorphic map $h:\,U\mapsto \C^n$ normalized by
$h(O)=O$ and $dh_O=\id$ such that
$dh_{z}(f(z))=\widehat{f}(h(z))$, where $\widehat{f}(z)=Az+T(z)$
with $T$ being a polynomial vector field containing only resonant
monomials.

The semigroup $\{\v_t\}_{t\ge0}$ is (locally around $O$)
conjugated to the semigroup $\{\psi_t\}_{t\ge0},\ \psi_t=h \circ
\v_t\circ h^{-1}$, generated by $\widehat{f}=A+T$. Since $T$
contains only resonant monomials and $\Re \alpha_n\leq \ldots\leq
\Re \alpha_1<0$, Lemma~\ref{resonances} implies that $\widehat{f}$
is triangular, i.e., $\{\psi_t\}_{t\ge0}$ satisfies the following
system:
\[
\begin{cases}
\stackrel{\cdot}{x_1}=\alpha_1 x_1\\
\stackrel{\cdot}{x_2}=\alpha_2 x_2+r_2(x_1)\\\ldots\\
\stackrel{\cdot}{x_n}=\alpha_n x_n+r_n(x_1,x_2,\ldots, x_{n-1}),
\end{cases}
\]
where the $r_j$'s are polynomials in $x_1,\ldots, x_{j-1}$
containing only resonant monomials. Such a system can be
integrated directly by first solving
$\stackrel{\cdot}{x_1}=\alpha_1 x_1$, then substituting such
solution into $\stackrel{\cdot}{x_2}=\alpha_2 x_2+r_2(x_1)$, and
so on. In the end, $\psi_t$ is of the form
\[
\psi_t(z)=(e^{t\alpha_1}z_1, e^{t\alpha_2}(z_2+R_{2,t}(z_1)),
\ldots, e^{t\alpha_n}(z_n+R_{n,t}(z_1,z_2,\ldots, z_{n-1}))),
\]
with $R_{j,t}$ a polynomial in $z_1,\ldots, z_{j-1}$ of (at most)
degree $M_j$ containing with only resonant monomials. Moreover,
$R_{j,t}$ depends also polynomially on $t$. It can be shown by
induction. It is true for $j=1$, so assume it is true for $j-1$.
Then the $l$-th component of $(\psi_t)$ for $l=1,\ldots, j-1$ is
of the form $\psi_{t,l}(z)=
e^{t\alpha_l}(z_l+R_{l,t}(z_1,z_2,\ldots, z_{l-1}))$ with
$R_{l,t}$ a polynomial in $z_1,\ldots, z_{l-1}$ of degree at most
$M_l$ and depending polynomially on $t$. Substituting these into
the differential equation $\stackrel{\cdot}{x_j}=\alpha_j
x_j+r_j(x_1,x_2,\ldots, x_{j-1})$, one obtains
\begin{eqnarray*}
\stackrel{\cdot}{x_j}=\alpha_j
x_j+r_j(e^{\alpha_1t}z_1,e^{t\alpha_2}(z_2+R_{2,t}(z_1)),\ldots\\
\ldots, e^{t\alpha_{j-1}}(z_{j-1}+R_{j-1,t}(z_1,z_2,\ldots,
z_{j-2}))).
\end{eqnarray*}
Therefore the solution is of the form $e^{\alpha_j t} g(t)$ for
some function $g$ such that $g(0)=z_j$ and
\[
\stackrel{\cdot}{g}(t)=e^{-\alpha_j t}r_j(x_1,x_2,\ldots,
x_{j-1}).
\]
Now, $r_j$ contains only resonant monomials for $\alpha_j$. Let
$z^m$ be such a resonant monomial. Then, taking into account that
$m_j=\ldots=m_n=0$ by Lemma~\ref{resonances}, it follows
\[
 z^m=a z_1^{m_1}\cdots
 z_{j-1}^{m_{j-1}}=e^{(m,\alpha)t}[z_1^{m_1}\cdots (z_{j-1}+R_{j-1,t}(z_1,z_2,\ldots,
 z_{j-2}))^{m_{j-1}}].
\]
Hence
\[
\stackrel{\cdot}{g}(t)=e^{(-\alpha_j+(m,\alpha))
t}[z_1^{m_1}\cdots (z_{j-1}+R_{j-1,t}(z_1,z_2,\ldots,
 z_{j-2}))^{m_{j-1}}],
\]
and, being $\alpha_j=(m,\alpha)$, then actually
\[
\stackrel{\cdot}{g}(t)= z_1^{m_1}\cdots
(z_{j-1}+R_{j-1,t}(z_1,z_2,\ldots,
 z_{j-2}))^{m_{j-1}}.
\]
Since this holds for all resonant monomials in $r_j$, this
proves that $R_{t,j}(z)$ is a polynomial in both $z_1,\ldots,
z_{j-1}$ and $t$. The degree of $R_{t,j}$ is at most $M_j$
because it contains only resonant monomials for $\alpha_j$.
This proves the induction and the claim about the $R_{j,t}$'s.

This fact implies that $\psi_{-t}(z)$ is well defined for all
$t\geq 0$ and $z\in \C^n$. Therefore, $\{\psi_t\}_{t\in\R}$ is a
group of polynomial automorphisms of $\C^n$.

Finally, since $O$ is an attracting fixed point by hypothesis,
then $h$ can be extended to all $D$ by imposing
$h(w)=\psi_{-t}(h(\v_t(w)))$ for all $w\in D$.
\end{proof}

\begin{example}\label{ex1}
For $n=2$ there is only one possible resonance, namely,
$\alpha_2=m\alpha_1$. Hence, up to conjugation, the dilation
semigroups in $\C^2$ are of the form:
\[
\v_t(z)=(e^{\alpha_1 t}z_1, e^{\alpha_2 t}(z_2+atz_1^m))
\]
for some $a\in \C$.
\end{example}

So, if the matrix $A=df_\tau$ is resonant, it may happen that all
elements of the semigroup generated by $f$ are not linearizable.
In this connection the following question arises naturally.
Suppose that one of the elements of the semigroup
$\mathcal{S}=\{\v_t\}_{t\ge0}$ (say, $\v_{t_0}$) is linearizable.
Find conditions which ensure that all other elements $\v_t,\
t\not=t_0,$ are linearizable too.

To answer this question we need the following notion.

\begin{definition}\label{star}
We say that the matrix $A=\diag(\alpha_1\ldots\alpha_n)$ has {\sl
pure real resonance} if there are $j=1,\ldots,n$ and $k\in\N^n$
such that $\Re\alpha_j=\Re(\alpha,k)$ but
$\alpha_j\not=(\alpha,k)$.
\end{definition}

In particular, if all eigenvalues $\alpha_j$ have the same
argument, then $A$ has not pure real resonance.

\begin{theorem}\label{linear}
Let $D\subset \C^n$ be a domain containing $O$. Let
$\{\v_t\}_{t\ge0}$ be a continuous dilation semigroup generated by
$f\in\Holo(D,\C^n)$ with $df_O=A$, where $A$ has not pure real
resonance. If there exists $t_0>0$ such that $\v_{t_0}$ is
linearizable by biholomorphic mapping $h:D\mapsto \C^n,\ h(O)=O$.
Then the semigroup $\{\v_t\}_{t\ge0}$ is linearizable by $h$.
\end{theorem}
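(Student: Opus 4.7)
The plan is to combine the Poincar\'e--Dulac normal form furnished by the preceding theorem with the linearizability of $\v_{t_0}$ to force the normal form to be linear, and then use the ``no pure real resonance'' hypothesis to transfer this linearization back to the specific conjugation $h$ in the statement. After normalizing $h$ so that $dh_O=\id$ (which is legitimate because differentiating $h\circ\v_{t_0}=e^{At_0}\circ h$ at $O$ shows $dh_O$ commutes with $e^{At_0}$), let $\tilde h$ denote the conjugation produced by the preceding theorem, so that $\tilde h\circ\v_t=P_t\circ\tilde h$ on $D$ with $P_t(z)=e^{At}z+R_t(z)$, where $R_t$ is polynomial in both $z$ and $t$ and contains only $A$-resonant monomials. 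Set $g:=h\circ\tilde h^{-1}$; then $g(O)=O$, $dg_O=\id$, and $g\circ P_{t_0}=e^{At_0}\circ g$.

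Next I would show $R_{t_0}\equiv 0$ by a standard homological-equation argument. Writing $g=\id+G$ with $G=\sum_{m\ge 2}G_m$ and $R_{t_0}=\sum_{m\ge 2}R_m$ in homogeneous components, and setting $B:=e^{At_0}$, the identity $g\circ P_{t_0}=B\circ g$ reads $R_{t_0}(z)=BG(z)-G(Bz+R_{t_0}(z))$. If $R_2=\cdots=R_{m-1}=0$, every cross-term on the right involving $R_{t_0}$ contributes only at degree $\ge m+1$, so the degree-$m$ part reduces to
\[
R_m=L_BG_m,\qquad L_B(U):=BU-U(B\,\cdot).
\]
Since $L_B$ acts diagonally on monomials with eigenvalues $\beta_j-\beta^k$, its kernel is the $B$-resonant subspace and its image is the $B$-non-resonant subspace; every $A$-resonant monomial is $B$-resonant, so $R_m$ lies in $\ker L_B$, and together with $R_m$ being in the image of $L_B$ this forces $R_m=0$. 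By induction $R_{t_0}\equiv 0$. The semigroup law $P_{nt_0}=P_{t_0}^n=e^{nAt_0}$ then gives $R_{nt_0}=0$ for every $n\in\N$, and the polynomial dependence of $R_t$ on $t$ yields $R_t\equiv 0$ for all $t\ge 0$. Hence $\tilde h\circ\v_t=e^{At}\circ\tilde h$ on $D$.

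The main subtlety, which I expect to be the principal obstacle, is transferring the linearization from $\tilde h$ to $h$; this is where the ``no pure real resonance'' hypothesis is used. Put $L:=h\circ\tilde h^{-1}$, so that $L(O)=O$, $dL_O=\id$, and $L\circ B=B\circ L$. Expanding $L(z)=z+\sum_{m\ge 2}L_m(z)$ and writing each monomial of $L_m$ as $c_{j,k}z^ke_j$ with $|k|=m\ge 2$, the commutation yields $c_{j,k}(e^{(\alpha,k)t_0}-e^{\alpha_jt_0})=0$. A nonzero $c_{j,k}$ therefore requires $((\alpha,k)-\alpha_j)t_0\in 2\pi i\Z$, and in particular $\Re(\alpha,k)=\Re\alpha_j$; the ``no pure real resonance'' hypothesis then promotes this to the genuine equality $(\alpha,k)=\alpha_j$. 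But then $e^{(\alpha,k)t}=e^{\alpha_jt}$ for every $t\ge 0$, so $L$ commutes with $e^{At}$ for all $t$. Consequently
\[
h\circ\v_t=L\circ\tilde h\circ\v_t=L\circ e^{At}\circ\tilde h=e^{At}\circ L\circ\tilde h=e^{At}\circ h,
\]
and the theorem is proved. Without the hypothesis, $L$ could contain monomials that are $B$-resonant but not $A$-resonant, and these would obstruct the commutation of $L$ with $e^{At}$ for $t\ne t_0$.
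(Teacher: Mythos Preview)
Your argument is correct, but it follows a different route from the paper's. The paper works directly with $\psi_t:=h\circ\v_t\circ h^{-1}$: assuming some lowest nonlinear homogeneous component $P_{m,t}$ of $\psi_t$ is nonzero, the semigroup law $\psi_{t+s}=\psi_t\circ\psi_s$ yields, for each coefficient $p^j_k(t)$, the ODE $\dot p^j_k=\alpha_j p^j_k+a^j_k e^{(\alpha,k)t}$. The explicit solutions (exponential or $te^{\alpha_j t}$) vanish at $t_0$ only if they vanish identically, and the ``no pure real resonance'' hypothesis is used exactly once, to exclude the bad case $\Re\alpha_j=\Re(\alpha,k)$ with $\alpha_j\neq(\alpha,k)$, where the solution could have an isolated zero at $t_0$. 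Your proof instead invokes the Poincar\'e--Dulac theorem (the preceding theorem) to produce $\tilde h$, kills $R_{t_0}$ by the homological equation, propagates $R_t\equiv 0$ via polynomial dependence in $t$, and finally uses the hypothesis to show the transition map $L=h\circ\tilde h^{-1}$ commutes with every $e^{At}$. The paper's approach is more self-contained and elementary (no appeal to Poincar\'e--Dulac); yours is more structural and makes transparent why the hypothesis is needed only at the last step, namely to rule out monomials in $L$ that are $e^{At_0}$-resonant but not $A$-resonant. One small expository point: your parenthetical about normalizing $h$ is slightly backwards --- a priori $h\circ\v_{t_0}=B_0\circ h$ with $B_0=dh_O\, e^{At_0}(dh_O)^{-1}$, and replacing $h$ by $(dh_O)^{-1}h$ gives both $dh_O=\id$ and $B_0=e^{At_0}$; the commutation of $dh_O$ with $e^{At_0}$ is a consequence, not the justification.
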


Not that even for the non-resonant case Theorem~3 completes
Theorem~2 since it asserts the following fact: if $h\in\Holo(D,
\C^n)$ is a linearizing mapping for $\v_{t_0}$, it also can serve
as a linearizing mapping for all $\v_t,\ t\ge0$.

\begin{proof}
Let us define $\psi_t:=h\circ \v_t \circ h^{-1}$. Then $\psi_t$ is
a semigroup on $h(D)$.

Let $\psi_t(z)=e^{At}z+\sum_m P_{m,t}(z)$ be the homogeneous
expansion at $O$ (which is defined on a small ball containing $O$
and contained in $g(D)$), where $m\geq 2$ is the least positive
integer such that $P_{m,t}\not\equiv 0$ for all $t,z$. If the
theorem holds then $m=+\infty$ (namely, $(\psi_t)$ is linear).
Seeking a contradiction, we assume that $m<+\infty$.

It follows from the theory of semigroups of holomorphic maps that
each $P_{m,t}(z)$ is real analytic in $t$.

Since by hypothesis $\psi_{t_0}=h\circ \v_{t_0} \circ h^{-1}$
is linear, then $P_{m,t_0}\equiv 0$.

Now, from $\psi_{t+s}=\psi_t\circ \psi_s$ it follows that
\begin{equation}\label{uno}
P_{m,t+s}(z)=e^{At}P_{m,s}(z)+P_{m,t}(e^{As}z).
\end{equation}
Write $P_{m,t}(z)=(\sum_{|k|=m}p^1_k(t)z^k,\ldots,
\sum_{|k|=m}p^n_k(t)z^k)$, where, as usual,
$z^k=z_1^{k_1}\cdots z_n^{k_n}$. From \eqref{uno} it follows
that for $j=1,\ldots, n$
\[
p^j_{k}(t+s)=e^{\alpha_j t}p_k^j(s)+p_k^j(t) e^{(\alpha,k)s}.
\]
Differentiating such an expression with respect to $t$ and setting
$t=0$, we obtain the following differential equation:
\begin{equation}\label{diff1}
\frac{d}{dt}p^j_{k}(s)=\alpha_jp_k^j(s)+a_k^je^{(\alpha,k)s},
\end{equation}
where we set $a_k^j=\frac{dp_k^j(t)}{dt}|_{t=0}$. There are two
cases:

\noindent (1) if $\Re\alpha_j\not=\Re(\alpha,k)$,  then imposing
the condition $p_k^j(0)=0$, equation~\eqref{diff1} has the
solution
\begin{equation}\label{due}
p^j_k(t)=a^j_k \frac{e^{(\alpha,k)t}-e^{\alpha_j
t}}{(\alpha,h)-\alpha_j}.
\end{equation}

\noindent (2) if $\Re\alpha_j=\Re(\alpha,k)$, then by our
assumption $\alpha_j=(\alpha,k)$. In this case, imposing the
condition $p_k^j(0)=0$, equation~\eqref{diff1} has the solution
\begin{equation}\label{due-res}
p^j_k(t)=a^j_k e^{t\alpha_j}t.
\end{equation}

By \eqref{due} and \eqref{due-res} it follows that
$p_k^j(t_0)=0$ if and only if $p_k^j(t)=0$ for all $t\geq 0$,
and hence $P_{m,t_0}\equiv 0$ if and only if $P_{m,t}\equiv 0$
for all $t\geq 0$, reaching a contradiction with our
hypothesis.
\end{proof}

Example~\ref{ex2} above shows that if $A$ has pure real resonance,
Theorem~\ref{linear} fails.

\begin{corollary}\label{cor1}
Let $\mathcal{S}=\{\v_t\}_{t\ge0}$ be a continuous semigroup of
dilation type generated by $f\in\Holo(D,\C^n)$ with
$df_O=A=\diag(\alpha_1,\ldots,\alpha_n)$. Suppose that there is
$t_0>0$ such that $\v_{t_0}$ is a linear mapping. Assume that one
of the following conditions holds:

(i) $A$ has not pure real resonance;

(ii) $e^{(\alpha,k)t_0}\not=e^{\alpha_jt_0}$ for all
$j=1,\ldots,n$ and $k\in\N^n$.

Then all elements of $\mathcal{S}$ are linear mappings.
\end{corollary}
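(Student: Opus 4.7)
The plan is to combine the hypothesis that $\v_{t_0}$ is linear with the arguments already developed, specializing them to the conjugation $h = \id$. A linear self-map of $D$ fixing the origin equals its own differential at the origin, and differentiating the Cauchy problem (\ref{cauchy}) at $O$ gives $d(\v_t)_O = e^{At}$; thus $\v_{t_0}(z) = e^{At_0}z$, so the identity map trivially linearizes $\v_{t_0}$ in the sense that $\id\circ\v_{t_0} = e^{At_0}\circ\id$. Under hypothesis (i), Theorem~\ref{linear} applied with $h = \id$ immediately yields $\v_t(z) = e^{At}z$ for every $t \geq 0$, and the conclusion follows.

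Under hypothesis (ii), I would re-run the calculation in the proof of Theorem~\ref{linear} with $\psi_t = \v_t$ (so that the conjugation is $\id$) and the expansion $\v_t(z) = e^{At}z + \sum_{|m|\geq 2} P_{m,t}(z)$ near $O$. Exactly the same derivation shows that each monomial coefficient $p_k^j(t)$ satisfies the ODE
\begin{equation*}
\frac{d}{dt}p_k^j(t) = \alpha_j p_k^j(t) + a_k^j e^{(\alpha,k)t}, \qquad p_k^j(0)=0.
\end{equation*}
Condition (ii) rules out any resonance $(\alpha,k) = \alpha_j$ (which would force $e^{(\alpha,k)t_0} = e^{\alpha_j t_0}$), so the unique solution is always
\begin{equation*}
p_k^j(t) = a_k^j\,\frac{e^{(\alpha,k)t} - e^{\alpha_j t}}{(\alpha,k) - \alpha_j}.
\end{equation*}
Linearity of $\v_{t_0}$ gives $p_k^j(t_0) = 0$, and the strict inequality $e^{(\alpha,k)t_0} \neq e^{\alpha_j t_0}$ then forces $a_k^j = 0$, whence $p_k^j \equiv 0$. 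All higher-order terms of the expansion vanish identically, so $\v_t(z) \equiv e^{At}z$ for every $t \geq 0$.

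The main obstacle, if one can call it that, is the bookkeeping observation that condition (ii) simultaneously rules out both the resonant case $(\alpha,k) = \alpha_j$ and the pure real resonances that condition (i) handles via Theorem~\ref{linear}. Once this is noted, the corollary is just two specializations of arguments already in hand: a direct citation in case (i) and a one-step reuse of the ODE calculation in case (ii).
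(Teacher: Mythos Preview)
Your treatment of case~(i) matches the paper exactly: Theorem~\ref{linear} applied with $h=\id$.

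For case~(ii) you take a genuinely different route. The paper does not re-run the ODE computation; instead it observes that condition~(ii) is precisely what makes the linear map $\v_{t_0}=e^{At_0}$ \emph{uniquely linearizable}. Indeed, if some normalized $h(z)=z+\cdots$ conjugated $\v_{t_0}$ to itself and had a nonzero monomial $a_kz^k$ (with $|k|\ge 2$) in its $j$-th coordinate, then comparing coefficients in $h(e^{At_0}z)=e^{At_0}h(z)$ would give $e^{(\alpha,k)t_0}=e^{\alpha_jt_0}$, contradicting~(ii). Once $\v_{t_0}$ is uniquely linearizable by $\id$, Corollary~\ref{cor4} (hence Theorem~\ref{commut-th}) forces $\id\circ\v_t=e^{At}\circ\id$ for all $t$.

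Both arguments are correct. Yours is more self-contained: it stays inside the analytic machinery of Theorem~\ref{linear} and avoids invoking the commutation result. The paper's argument, on the other hand, explains \emph{why} condition~(ii) is the natural hypothesis---it is exactly the non-resonance condition on the eigenvalues of $e^{At_0}$ that guarantees unique linearizability of a diagonal linear map (cf.\ the remark preceding Theorem~\ref{commut-th})---and thereby ties the corollary back into the unique-linearizability framework developed earlier. One small caveat on your write-up: the identity $P_{m,t+s}(z)=e^{At}P_{m,s}(z)+P_{m,t}(e^{As}z)$, and hence the ODE for $p_k^j$, holds only for the \emph{lowest} nonvanishing degree $m$; your sentence ``each monomial coefficient $p_k^j(t)$ satisfies the ODE'' should be read as the minimal-degree contradiction argument of Theorem~\ref{linear}, not as a statement valid simultaneously for all degrees.
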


\begin{proof}
If condition (i) holds, the assertion follows immediately by
Theorem~\ref{linear}.

Assume that condition (ii) holds. First, we show that $\v_{t_0}$
is uniquely linearizable. Indeed, let $h(z)=z+\ldots$ be a
linearizing mapping different from $\id$. This means that
$h\circ\v_{t_0}=\v_{t_0}\circ h$ and for some $j=1,\ldots,n,\,$
the $j$-th coordinate of $h$ contains a non-zero monomial
$a_k{z_1}^{k_1}\ldots{z_n}^{k_n}$ with $|k|\ge2.$ Therefore,
\[
h_j\left(e^{\alpha_1t_0}z_1,\ldots,e^{\alpha_nt_0}\right)
=e^{\alpha_jt_0}h_j(z),
\]
and so
\[
a_ke^{(\alpha,k)t_0}z^k=a_ke^{\alpha_jt_0}z^k.
\]
The contradiction provides that $\v_{t_0}$ is uniquely
linearizable by the identity mapping $\id$.

Now, Corollary~\ref{cor4} implies that the all mappings $\v_t,\
t\ge0,$ are linearizable by the identity mapping. Hence, they are
linear.
\end{proof}

Combining Corollary~\ref{cor1} with
Proposition~\ref{prop-normal2}, we get the following result.

\begin{corollary}
Let $\B^n$ be the unit ball of \,$\C^n$ and let
$\mathcal{S}=\{\v_t\}_{t\ge0}$ be a continuous semigroup of
dilation type generated by $f\in\Holo(\B,\C^n)$ with
$df_O=A=\diag(\alpha_1,\ldots,\alpha_n)$. Suppose that there is
$t_0>0$ such that $\v_{t_0}$ is a linear fractional self-mapping
of $\B^n$. Assume that one of the following conditions holds:

(i) $A$ has not pure real resonance;

(ii) $\mathcal{S}$ is normally linearizable;

(iii) $e^{(\alpha,k)t_0}\not=e^{\alpha_jt_0}$ for all
$j=1,\ldots,n$ and $k\in\N^n$.

Then for all $t\geq 0$ the mapping $\v_{t}$ is a linear fractional
self-map of\, $\B^n$.
\end{corollary}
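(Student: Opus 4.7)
My plan is to reduce the statement to its already-established linear analogues---Corollary~\ref{cor1} under hypotheses (i) and (iii), and Proposition~\ref{prop-normal2} under hypothesis (ii)---by conjugating the whole semigroup with a linear fractional biholomorphism that turns $\v_{t_0}$ into its linear part $e^{At_0}$. First I would construct a linear fractional map $h:\B^n\to\C^n$ satisfying $h(O)=O$, $dh_O=\id$, and $h\circ\v_{t_0}=e^{At_0}\circ h$, and then set $\tilde{\v}_t:=h\circ\v_t\circ h^{-1}$. This produces a continuous dilation semigroup on the domain $\Omega:=h(\B^n)\ni O$ whose generator has differential $A$ at the origin and whose $t_0$-element is the linear map $\tilde{\v}_{t_0}=e^{At_0}$.

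To construct $h$ I would use the explicit K\oe nigs formula available for linear fractional maps. Since $\v_{t_0}$ is a linear fractional self-map of $\B^n$ fixing $O$ with $d(\v_{t_0})_O=e^{At_0}$, it has the form $\v_{t_0}(z)=(e^{At_0}z)/(\langle z,c\rangle+1)$ for some $c\in\C^n$. A direct induction then gives
\[
\v_{t_0}^n(z)=\frac{(e^{At_0})^n z}{\langle z,c_n\rangle+1},\qquad c_n=\sum_{k=0}^{n-1}\bigl((e^{At_0})^*\bigr)^k c.
\]
Because $\Re\sigma(A)<0$ implies $\sigma(e^{At_0})\subset\Delta$, the operator $I-(e^{At_0})^*$ is invertible, $c_n\to\tilde c:=(I-(e^{At_0})^*)^{-1}c$, and one obtains
\[
h(z):=\lim_{n\to\infty}(e^{At_0})^{-n}\v_{t_0}^n(z)=\frac{z}{\langle z,\tilde c\rangle+1},
\]
an explicit normalized linear fractional map intertwining $\v_{t_0}$ with $e^{At_0}$. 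That the denominator $\langle z,\tilde c\rangle+1$ does not vanish on $\B^n$ (so $h$ is a biholomorphism from $\B^n$ onto $\Omega$) follows from the self-map property $\v_{t_0}(\B^n)\subset\B^n$ together with the iterative identity above.

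Once $h$ is available, I would lift its intertwining property to the entire semigroup. Under (iii) the eigenvalues $e^{\alpha_j t_0}$ of $\tilde{\v}_{t_0}$ satisfy no multiplicative resonance, so $\tilde{\v}_{t_0}$ is uniquely linearizable and Corollary~\ref{cor4} (equivalently Theorem~\ref{commut-th} with $\psi=\tilde{\v}_{t_0}$ commuting with $\{\tilde{\v}_t\}$) shows every $\tilde{\v}_t$ is linear. Under (i), since $A$ has no pure real resonance and $\tilde{\v}_{t_0}$ is already linear, Theorem~\ref{linear} yields the same conclusion. Under (ii), normal linearizability is preserved under biholomorphic conjugation, and Proposition~\ref{prop-normal2} applies directly to $\{\tilde{\v}_t\}$. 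In all three cases $\v_t=h^{-1}\circ\tilde{\v}_t\circ h$ is a composition of linear fractional maps, hence linear fractional. The step I expect to be most delicate is the verification that $h$ is a genuine linear fractional biholomorphism of $\B^n$: the K\oe nigs algebra is routine, but ensuring that the denominator $\langle z,\tilde c\rangle+1$ is nonvanishing on all of $\B^n$ requires careful use of the self-map hypothesis $\v_{t_0}(\B^n)\subset\B^n$.
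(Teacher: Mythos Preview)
Your approach is essentially the paper's: conjugate by a linear fractional $h$ fixing $O$ so that $h\circ\v_{t_0}\circ h^{-1}$ is linear, then invoke Corollary~\ref{cor1} (cases (i), (iii)) and Proposition~\ref{prop-normal2} (case (ii)). The paper simply cites \cite[Thm.~3.2 and Rmk.~3.4]{B-C-DM} for the existence of such an $h$, whereas you supply the explicit K\oe nigs construction $h(z)=z/(\langle z,\tilde c\rangle+1)$; this is a pleasant and self-contained addition, and your Hurwitz-type argument that the denominator is nonvanishing on $\B^n$ is fine.

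One caveat in case~(ii): the assertion ``normal linearizability is preserved under biholomorphic conjugation'' is not true in general (if $\lambda(A)\ge 2$, the quadratic tail of a generic conjugation need not be killed by $e^{-At}$). What saves you is the special form of your $h$: since $h(u)=u\cdot(1+\langle u,\tilde c\rangle)^{-1}$ and $\v_t(z)\to O$, one has
\[
e^{-At}\tilde\v_t(w)=\frac{e^{-At}\v_t(h^{-1}(w))}{1+\langle \v_t(h^{-1}(w)),\tilde c\rangle}\longrightarrow \lim_{t\to\infty}e^{-At}\v_t(h^{-1}(w)),
\]
so normal linearizability does transfer to $\{\tilde\v_t\}$. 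Replace the general claim by this one-line computation and the argument is complete.
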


\begin{proof}
According to \cite[Thm. 3.2 and Rmk. 3.4]{B-C-DM} (and its proof)
there exists $h:\,\B^n\mapsto\C^n$ a linear fractional mapping
fixing $O$  such that $h\circ \v_{t_0} \circ h^{-1}$ is linear. By
Corollary~\ref{cor1} and Proposition~\ref{prop-normal2}, it
follows that $h\circ \v_{t} \circ h^{-1}$ is linear for all
$t\ge0$. Therefore, $\v_t$ is the composition of linear fractional
maps and hence linear fractional for all $t\ge0$.
\end{proof}

\begin{corollary}
Let $\mathcal{S}=\{\v_t\}_{t\ge0}$ be a continuous semigroup of
dilation type generated by $f\in\Holo(\B,\C^n),\
f(z)=Az+\sum\limits_{\ell\ge m}Q_\ell(z)$, where $Q_\ell$ is a
homogenous polynomial of order~$\ell$ and $m>\lambda(A)$. If for
some $t_0>0$, the semigroup element $\v_{t_0}$ is a linear
(respectively, linear fractional) mapping, then all the elements
of $\mathcal{S}$ are linear (respectively, linear fractional)
mappings.
\end{corollary}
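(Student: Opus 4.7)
The plan is to reduce this corollary to the two preceding results by observing that the hypothesis on the homogeneous expansion of $f$ is exactly what guarantees normal linearizability of $\mathcal{S}$. Once normal linearizability is in hand, the rigidity statements (linear to linear, linear fractional to linear fractional) already follow from what has been proved.

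First, I would observe that the decomposition $f(z)=Az+\sum_{\ell\ge m}Q_\ell(z)$ with $m>\lambda(A)$ matches exactly the hypothesis of Proposition~\ref{prop-normal1}. Applying that proposition yields immediately that $\mathcal{S}$ is normally linearizable, i.e.\ the limit
\[
h(z)=\lim_{t\to\infty}e^{-At}\v_t(z)
\]
exists on all of $\B$ and satisfies $h\circ\v_t=e^{At}\circ h$. This is the crucial extra structure that the hypothesis on $f$ buys us, and it is the single ingredient that unlocks both halves of the conclusion.

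For the linear case, I would then invoke Proposition~\ref{prop-normal2}: we have a normally linearizable dilation semigroup one of whose elements $\v_{t_0}$ is linear, so the proposition forces every $\v_t$, $t\ge 0$, to be linear. For the linear fractional case, I would invoke the immediately preceding corollary, whose condition (ii) is precisely that $\mathcal{S}$ is normally linearizable. Since $\v_{t_0}$ is assumed linear fractional and condition (ii) has just been verified, that corollary delivers directly that $\v_t$ is a linear fractional self-map of $\B^n$ for every $t\ge 0$.

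There is no serious obstacle here: the heavy lifting was done in Propositions~\ref{prop-normal1} and \ref{prop-normal2} and in the preceding corollary. The only thing to verify is the matching of hypotheses, in particular that $\lambda(A)$ is well defined under the standing assumption that $A$ is diagonal (in force since the Poincar\'e--Dulac discussion), so that the condition $m>\lambda(A)$ is meaningful and connects cleanly to Proposition~\ref{prop-normal1}. With this book-keeping, the present statement is essentially a streamlined combination of those three earlier results.
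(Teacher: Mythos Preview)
Your proposal is correct and matches the paper's intended argument: the corollary is stated without proof precisely because it follows immediately by combining Proposition~\ref{prop-normal1} (to obtain normal linearizability from $m>\lambda(A)$) with Proposition~\ref{prop-normal2} for the linear case and with condition~(ii) of the preceding corollary for the linear fractional case. One minor quibble: $\lambda(A)$ is defined for any matrix $A$, not just diagonal ones, so the diagonality assumption plays no role here.
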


A direct consequence of our Theorems~2 and 3 and a recent Forelli
type extension theorem (see \cite[Theorem 6.2]{K-P-S}) is the
following assertion.

\begin{corollary}
Let $\mathcal{S}=\{\v_t\}_{t\ge0}$ be a continuous semigroup of
dilation type generated by $f\in\Holo(D,\C^n)$ with
$df_O=A=\diag(\alpha_1,\ldots,\alpha_n)$, where all eigenvalues
$\alpha_j$ have the same argument. Suppose that a function $F$
defined on $D$ is real analytic at $O$, and that its restrictions
to the integral curves of the vector field $f$ are holomorphic. If
at least one of the following conditions holds:

(i) $A$ is not resonant,

\noindent or

(ii) there is $t_0$ such that $\v_{t_0}$ is linearizable,

\noindent then $F$ is holomorphic on $D$.
\end{corollary}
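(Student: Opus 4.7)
The plan is to reduce to the linear model $z\mapsto e^{At}z$ via a biholomorphic conjugation and then invoke the Forelli-type extension theorem of \cite{K-P-S}. The crucial structural observation is that the common-argument hypothesis rules out pure real resonance in the sense of Definition~\ref{star}: writing $\alpha_j=r_je^{i\theta}$ with $r_j>0$, the identity $\Re\alpha_j=\Re(\alpha,k)$ reduces to $r_j=\sum_l k_l r_l$, which already forces $\alpha_j=(\alpha,k)$. So either hypothesis places us in a regime where the semigroup is globally linearizable.

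Under hypothesis (i) the absence of resonances of any kind means that Theorem~2 directly supplies an injective holomorphic $h:D\to\C^n$ with $h(O)=O$, $dh_O=\id$, and $h\circ\v_t=e^{At}\circ h$ for all $t\ge0$. Under hypothesis (ii) Theorem~\ref{linear} applies, since $A$ has no pure real resonance and $\v_{t_0}$ is linearizable; this yields the same kind of biholomorphic linearization $h$.

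With such an $h$ in hand, set $G:=F\circ h^{-1}$ on $h(D)$. Then $G$ is real analytic at $O$, and the integral curves $t\mapsto\v_t(z_0)$ of $f$ are transported by $h$ onto the half-orbits $t\mapsto e^{At}h(z_0)$ of the linear field $Az$. These half-orbits lie inside the genuine complex curves $\zeta\mapsto e^{A\zeta}h(z_0)$, $\zeta\in\C$, and the hypothesis on $F$ together with real-analyticity of $G$ at $O$ lets the restriction of $G$ to each such complex curve be identified with a holomorphic function of $\zeta$, extending the already holomorphic restriction to real $\zeta$. At this point \cite[Thm.~6.2]{K-P-S} applies: a function real analytic at $O$ and holomorphic along the complex orbits of a diagonal linear dilation semigroup with eigenvalues of common argument is holomorphic in a neighborhood of $O$. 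Hence $G$, and therefore $F=G\circ h$, is holomorphic on some neighborhood $U$ of $O$ in $D$. To propagate holomorphy from $U$ to all of $D$, I would use that $O$ is a global attractor for $\mathcal{S}$: for each $z_0\in D$ there is $T\ge0$ with $\v_T(z_0)\in U$, and holomorphy of $F$ then extends back along the holomorphic orbit through $z_0$ by analytic continuation, using precisely the hypothesis that $F$ is holomorphic on integral curves of $f$.

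The main obstacle I expect is twofold. First, one must verify that \cite[Thm.~6.2]{K-P-S} is formulated for exactly this class of complex orbits (those of $\{e^{At}\}$ with aligned eigenvalues) and accepts the initial holomorphy data coming from real integral curves rather than, say, complex lines through $O$; the common-argument hypothesis is strong evidence that this is the intended setting, but matching the precise formulation to the reference is essential. Second, the step from holomorphy on $U$ to holomorphy on $D$ requires care, since the $\v_t$ are not invertible on $D$ in general, so the continuation must be carried out along the one-dimensional complex orbit through each point of $D$ using the holomorphy of $F$ along these orbits.
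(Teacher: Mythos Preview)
Your proposal is correct and follows exactly the route the paper indicates: the paper's ``proof'' is literally the single sentence that the corollary is a direct consequence of Theorems~2 and~3 together with \cite[Theorem~6.2]{K-P-S}, and you have unpacked precisely those three ingredients (using the common-argument hypothesis to exclude pure real resonance so that Theorem~\ref{linear} applies in case~(ii)). Your discussion of the two residual issues---matching the exact hypotheses of the cited Forelli-type theorem and propagating holomorphy from a neighborhood of $O$ to all of $D$---goes beyond what the paper itself supplies, but does not deviate from its intended argument.
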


\end{document}